\theoremstyle{plain}
\newtheorem{theorem}{Theorem}[section]
\newtheorem{lemma}[theorem]{Lemma}
\theoremstyle{definition}
\newtheorem{definition}[theorem]{Definition}
\newtheorem{counter example}[theorem]{Counter Example}
\newtheorem{example}[theorem]{Example}
\numberwithin{equation}{section}
\author[S. Bag]{Sagarmoy Bag}
\address{Department of Pure Mathematics, University of Calcutta, 35, Ballygunge Circular Road, Kolkata 700019, West Bengal, India}
\email{sagarmoy.bag01@gmail.com}%
\author[R.C.Manna]{Ram Chandra Manna}
\address{Ramakrishna Mission Vidyamandir, Belur Math, Howrah-711202, West Bengal, India}
\email{mannaramchandra8@gmail.com}
\author[A. B. Raha]{Asit Baran Raha}
\address{Indian Statistical Institute, 203 Barrackpore Trunk Road, Kolkata-700 108, West Bengal, India}
\email{rahaab@gmail.com}
\thanks{ The second author thanks to Government of West Bengal, India, for financial support.}
\subjclass[2010]{Primary 54C40; Secondary 46E25}
\begin{document}

\title[ $CCS$-normal spaces]{ $CCS$-normal spaces }



\thanks {}
\maketitle	

\begin{abstract}
A space $X$ is called $CCS$-normal space if there exist a normal space $Y$ and a bijection $f: X\mapsto Y$ such that $f\lvert_C:C\mapsto f(C)$ is homeomorphism for any cellular-compact subset $C$ of $X$. We discuss about the relations between $C$-normal, $CC$-normal, $Ps$-normal spaces with $CCS$-normal.
\end{abstract}

\section{Introduction}
A topological space $X$ is called cellular-compact space if for any disjoint family $\mathcal{U}$ of non-empty open subsets of $X$, there exists a compact subset $K$ of $X$ such that $K\cap U\neq \emptyset$, for any $U\in \mathcal{U}$. The notion of the cellular-compact space was first introduced in \cite{T2019}.

A space $X$ is called $CCS$-normal (resp. $C$-normal, $CC$-normal, $Ps$-normal) space if there exist a normal space $Y$ and a bijection $f: X\rightarrow Y$ such that $f\lvert_C: C\rightarrow f(C)$ is a homeomorphism for any cellular-compact (resp. compact, countably compact, pseudocompact) subspace of $X$. The notion of $C$-normality is defined in \cite{AK2017}. The authors of \cite{AK2017} mentioned that $C$-normality was first introduced by Arhangel'skii while presenting a talk in a seminar held at Mathematics Department, King Abdulaziz University at Jeddah, Saudi Arabia in 2012. $CC$-normal spaces and $Ps$-spaces are initiated in \cite{KA2017} and \cite{B2019} respectively. $CCS$-normal is a generalization of normal space. Every $Ps$-normal space is $CCS$-normal and every $CCS$-normal space is $C$-normal. We give an example of a $C$-normal space which is not $CCS$-normal [Example \ref{CCbutnotCCS}]. Also we give an example of a space which is $CCS$-normal but not $Ps$-normal [ Example \ref{CCSbutnotPs}].
Finally we conclude this article by raising some open problems related to $CCS$-normal spaces.

\section{$CCS$-normal spaces}

\begin{definition}
 A space $X$ is called $CCS$-normal if there exist a normal space $Y$ and a bijection $f:X\rightarrow Y$ such that $f|_C:C\rightarrow f(C)$ is a homeomorphism for every cellular-compact subset $C$ of $X$.
\end{definition}

\begin{theorem}
Every $CCS$-normal space is a $C$-normal space.
\end{theorem}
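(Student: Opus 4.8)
The plan is to reduce $C$-normality to $CCS$-normality by showing that the class of compact subspaces sits inside the class of cellular-compact subspaces; once that containment is in hand, the very same witness $(Y,f)$ that makes $X$ a $CCS$-normal space will make it a $C$-normal space, with no further work.

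First I would record the elementary observation that \emph{every compact space is cellular-compact}. Indeed, if $Z$ is compact and $\mathcal{U}$ is any disjoint family of non-empty open subsets of $Z$, then the compact set $K=Z$ itself satisfies $K\cap U=U\neq\emptyset$ for every $U\in\mathcal{U}$. Applying this to a compact subset $C$ of a space $X$: since $C$ with the subspace topology is a compact space, it is a cellular-compact space, i.e.\ $C$ is a cellular-compact subset of $X$. (The only point worth stating carefully is that "cellular-compact subset" is to be read as "subspace that is a cellular-compact space", matching the way "compact subset" is used in the definition of $C$-normality.)

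Next I would simply unwind the definitions. Let $X$ be $CCS$-normal, and let $Y$ be a normal space and $f\colon X\to Y$ a bijection such that $f|_C\colon C\to f(C)$ is a homeomorphism for every cellular-compact subset $C$ of $X$. Given an arbitrary compact subset $C$ of $X$, the previous step shows $C$ is cellular-compact, hence $f|_C\colon C\to f(C)$ is a homeomorphism. Therefore the same pair $(Y,f)$ witnesses that $X$ is $C$-normal, completing the proof.

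There is essentially no serious obstacle here; the content is entirely in the (trivial) implication "compact $\Rightarrow$ cellular-compact" for subspaces, after which the result is a definition chase. The only thing to be careful about is not to overclaim a converse: the argument does not give $C$-normal $\Rightarrow$ $CCS$-normal, which is exactly why a separating example (Example \ref{CCbutnotCCS}) is needed later.
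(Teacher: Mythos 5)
Your proof is correct and follows essentially the same route as the paper: observe that every compact subset is cellular-compact (take $K=C$ itself as the required compact set meeting each member of the cellular family), then reuse the $CCS$-normality witness $(Y,f)$. The paper states the implication "compact $\Rightarrow$ cellular-compact" without justification, whereas you spell it out; otherwise the arguments coincide.
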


\begin{proof}
Let $X$ be a $CCS$-normal space. Then there exist a normal space $Y$ and a bijection $f:X\rightarrow Y $ such that $f|_C:C\rightarrow f(C)$ is a homeomorphism for every cellular-compact subset $C$ of $X$. Let $C$ be a compact subset of $X$. Then $C$ is cellular-compact. Hence $f|_C:C\rightarrow f(C) $ is a homeomorphism. Therefore $X$ is a $C$-normal space.
\end{proof}

In Proposition 3.1,\cite{T2019}, the authors proved that every Tychonoff Cellular-Compact space is pseudocompact. But we prove that the result holds anyway. We do not need Tychonoffness property.

\begin{theorem}\label{CCisPS}
Any cellular-compact space is pseudocompact.
\end{theorem}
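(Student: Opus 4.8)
The plan is to work directly from the definition that a space $X$ is \emph{pseudocompact} if and only if every continuous function $g:X\rightarrow\mathbb{R}$ is bounded. This formulation presupposes no separation axiom, which is precisely why Tychonoffness can be dispensed with. So I would argue by contradiction: assume $X$ is cellular-compact but not pseudocompact, and fix a continuous $g:X\rightarrow\mathbb{R}$ that is unbounded.

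The next step is to manufacture a disjoint family of non-empty open sets out of $g$. Replacing $g$ by $-g$ if necessary, we may assume $g$ is unbounded above. Then recursively pick points $x_n\in X$ with $g(x_1)>1$ and $g(x_{n+1})>g(x_n)+1$ for all $n$. Put $U_n=g^{-1}\bigl(\bigl(g(x_n)-\tfrac13,\,g(x_n)+\tfrac13\bigr)\bigr)$. Each $U_n$ is open by continuity of $g$ and non-empty since it contains $x_n$; and since the real intervals $\bigl(g(x_n)-\tfrac13,\,g(x_n)+\tfrac13\bigr)$ are pairwise disjoint by the choice of the $x_n$, the family $\mathcal{U}=\{U_n:n\in\mathbb{N}\}$ is pairwise disjoint.

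Now I would invoke cellular-compactness of $X$ on $\mathcal{U}$: there is a compact set $K\subseteq X$ with $K\cap U_n\neq\emptyset$ for every $n$. Choosing $y_n\in K\cap U_n$ gives $g(y_n)>g(x_n)-\tfrac13\to\infty$, so $g(K)$ is an unbounded subset of $\mathbb{R}$. This contradicts the fact that the continuous image of a compact set is compact, hence bounded in $\mathbb{R}$. Therefore $X$ is pseudocompact.

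I do not expect any serious obstacle; the argument is short once the right reformulation of pseudocompactness is in hand. The only points needing a little care are: making the chosen open sets genuinely pairwise disjoint, so that the cellular-compactness hypothesis actually applies (handled by spacing the values $g(x_n)$ more than $\tfrac23$ apart); and flagging that, in the absence of Tychonoffness, ``pseudocompact'' must be read as ``every continuous real-valued function is bounded,'' so that the reasoning is not circular.
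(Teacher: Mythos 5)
Your proof is correct, and it takes a genuinely different route from the paper's. The paper argues structurally: given an infinite disjoint family of non-empty open sets, cellular-compactness yields a compact set $K$ meeting each member; picking one point from each intersection gives an infinite subset of $K$, which must have an accumulation point, so every infinite disjoint open family accumulates somewhere; this is feeble compactness, which implies pseudocompactness. You instead work directly from the functional definition (``every continuous $g:X\rightarrow\mathbb{R}$ is bounded''), manufacture the disjoint open family $U_n=g^{-1}\bigl(\bigl(g(x_n)-\tfrac13,g(x_n)+\tfrac13\bigr)\bigr)$ from a hypothetical unbounded $g$, and derive a contradiction from the boundedness of $g(K)$ for the compact set $K$ that cellular-compactness provides. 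Both arguments avoid any separation axiom, which is the point of the theorem. Your version is self-contained and more elementary, needing only that continuous images of compact sets are compact and that compact subsets of $\mathbb{R}$ are bounded; the paper's version is shorter but leans on the intermediate notion of feeble compactness and the (standard but unproved-there) implication ``feebly compact $\Rightarrow$ pseudocompact.'' Your care in spacing the values $g(x_n)$ more than $\tfrac23$ apart so the preimages are genuinely pairwise disjoint is exactly the right point to flag, since cellular-compactness only applies to disjoint families.
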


\begin{proof}
Let $X$ be a Cellular-Compact space. Let $\mathcal{U}$ be an infinite family of non-empty disjoint open sets. There exists a compact set $K\subset X$ such that $K\cap U \neq \emptyset$ for $U\in \mathcal{U}$. Choose $x_{U}\in K\cap U, U\in \mathcal{U}$. The infinite set $\{x_{U} : U\in \mathcal{U}\}$ is contained in $K$, a compact set. Hence $\{x_{U} : U\in \mathcal{U} \}$ has an accumulation point, say $x\in K$. Then the family $\mathcal{U}$ has an accumulation point. Therefore $X$ is feebly compact which implies pseudocompact.     	
\end{proof}	

\begin{theorem}\label{PSimpliesCCS}
Any $Ps$-normal space is $CCS$-normal.
\end{theorem}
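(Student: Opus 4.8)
The plan is to reuse verbatim the normal space and bijection that already witness $Ps$-normality; no new construction is needed. So I would begin by assuming $X$ is $Ps$-normal and fixing a normal space $Y$ together with a bijection $f:X\rightarrow Y$ such that $f|_D:D\rightarrow f(D)$ is a homeomorphism for every pseudocompact subspace $D$ of $X$. The claim to establish is that this same pair $(Y,f)$ already witnesses the $CCS$-normality of $X$, so the entire argument reduces to comparing two classes of subspaces of $X$.

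The key step is the observation that every cellular-compact subset of $X$ is a pseudocompact subspace of $X$. Given a cellular-compact subset $C\subseteq X$, I would regard $C$ with its subspace topology; by hypothesis $C$ is then a cellular-compact space in its own right, and Theorem \ref{CCisPS} applies directly to it, yielding that $C$ is pseudocompact. Hence $C$ is one of the subspaces $D$ to which the defining property of $f$ applies, so $f|_C:C\rightarrow f(C)$ is a homeomorphism. Since $C$ was an arbitrary cellular-compact subset, $X$ is $CCS$-normal, completing the proof.

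I do not expect a genuine obstacle here: the substance of the statement is entirely carried by Theorem \ref{CCisPS}, and what remains is a one-line inclusion of subspace classes. The only point that needs a moment's attention is purely notational — one must read ``cellular-compact subset'' as ``subset that is cellular-compact in the subspace topology'', so that Theorem \ref{CCisPS} can be legitimately invoked on $C$ itself; once this reading is in place the argument is immediate. It is worth noting in passing that, combined with the first theorem of this section, this yields the chain: $Ps$-normal $\Rightarrow$ $CCS$-normal $\Rightarrow$ $C$-normal.
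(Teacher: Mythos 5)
Your argument is correct and is essentially identical to the paper's proof: both take the witness pair $(Y,f)$ from $Ps$-normality and reduce the claim to the fact that every cellular-compact subspace is pseudocompact, which is exactly Theorem \ref{CCisPS}. Nothing further is needed.
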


\begin{proof}
Let $X$ be a $Ps$-normal space. Then there exist a normal space $Y$ and a bijection $f:X\rightarrow Y$ such that $f|_P:P\rightarrow f(P)$ is a homeomorphism for every pseudocompact subset $P$ of $X$. Let $C$ be a cellular-compact subset of $X$. So, $C$ is a pseudocompact subset of $X$ by Theorem \ref{CCisPS}. Since $X$ is $Ps$-normal, $f|_C:C\rightarrow f(C)$ is a homeomorphism. Hence $X$ is a $CCS$-normal space.	
\end{proof}

In $[2]$ the authors have shown that the Dieudonne Plank is $C$-normal. In what follows we can demonstrate that it is, indeed,$Ps$-normal. Let $X$ be the Dieudonne plank i.e, $X= [0,\omega_{1}]\times [0,\omega_{0}]-(\omega_{1},\omega_{0})$ with the topology described bellow:

 Write $X=A\cup B\cup N$ where $A= \{\omega_{1}\}\times \mathbb{N}, B= [0, \omega_{1})\times \{\omega_{0}\}, N=[0,\omega_{1})\times \mathbb{N}$, where $\mathbb{N}$ includes $0$ also. All points of $N$ are isolated. Basic open neighbourhoods of $ (\omega_{1}, n)\in A $ are of the form $U_{n}(\alpha) = (\alpha, \omega_{1}]\times \{n\},\alpha<\omega_{1}, n\in \mathbb{N}$. Basic open neighbourhoods of $(\alpha, \omega_{0})\in B$ are of the form $V_{\alpha}(n)= \{\alpha\}\times (n,\omega_{0}], n\in \mathbb{N}, \alpha<\omega_{0}$. $A$ and $B$ are closed subsets of $X$. Any horizontal line $ B_{n}= [0,\omega_{1}]\times \{n\}$ is clopen set,$n\in \mathbb{N}$. No $B_{n}$ is pseudocompact: $B_{n} = [0,\omega_{1}]\times \{n\}$. Let $U_{n}(\alpha)= (\alpha,\omega_{1}]\times \{n\}$  be an open neighbourhood of $(\omega_{1}, n)\in B_{n}$. Since $\alpha <\omega_{1}$ is an infinite countable ordinal, $[0,\alpha)$ is a countably infinite set . We can write $[0,\alpha) = \{0=\alpha_{1}<\alpha_{2}<\alpha_{3}<\dots <\alpha_{m}<\dots \}$. Define $f:B_{n}\rightarrow \mathbb{R}$ by $f(\beta) = 0, \alpha\leq \beta\leq \omega_{1}, = m $ if $\beta= \alpha_{m}, m\in \mathbb{N}$. Clearly $f$ is continuous and unbounded. This shows that $X$ is not pseudocompact; for if $X$ is pseudocompact, each $B_{n}$ , being clopen subset , would be pseudocompact-which is false. Suppose $C\subset X$ is pseudocompact. Then $C\cap B_{n}$, for each $n\in\mathbb{N}$ is clopen subset of $C$ and hence, pseudocompact. Now $C\cap B_{n}$ consists of either isolated points of $B_{n}$ or atmost one non isolated point, viz, $(\omega_{1}, n)$. Hence $C\cap B_{n}$ cannot be pseudocompact if infinite. So $C\cap B_{n}$ is either empty or a non-empty finite set for each $n\in\mathbb{N}$. Now take $Y = X$ as set with a topology described by the following nbhood system : All points of $N\cup A = [0, \omega_{1}]\times \mathbb{N}$ are isolated. The neighbourhoods of the points $(\alpha, \omega_{0})$ are unaltered. The topology of $Y$ is $T_{4}$, first-countable and is finer than the topology of $X$. Let $C$ be a pseudocompact subset of $X$. Let $(\alpha, \upsilon)$ be a typical point of $X, \alpha\in [0,\omega_{1}], \upsilon\in [0, \omega_{0}]$. Let $(\alpha, \upsilon)\in C$ and $1_{X}:X\rightarrow Y$ be the identity map. If $(\alpha, \upsilon )\in N,$ say $(\alpha, n), (\alpha, n)$ is isolated both in $X$ and $Y$. Then $1_{X}$ is continuous at $(\alpha,n)$. If $(\alpha, \upsilon)\in B$, say $(\alpha, \omega_{0})$, the $(\alpha,\omega_{0})$ has the same nbhood in $X$ and $Y$. $1_{X}$ is continuous at $(\alpha, \omega_{0})$ as well.

If $(\alpha,\upsilon)\in A$ i.e, $(\omega_{1},n)\in A$. Then $(\omega, n)\in C\cap B_{n}$. Now $C\cap B_{n}$ is a finite set containing $(\omega_{1},n )$. Let $C\cap B_{n}-(\omega_{1},n)=\{(\alpha_{1},n),(\alpha_{2},n),\dots (\alpha_{k},n)\}$, if non-empty. Choose $\beta<\omega_{1}$ such that $\alpha_{i}<\beta, 1\leq i\leq k. (\beta, \omega_{1}]\times \{n\}$ is a neighbourhood of $(\omega_{1},n) $ in $B_{n}$ and $C\cap (\beta, \omega_{1}]\times \{n\} = \{(\omega_{1},n)\}$ is an open nbhood of $(\omega_{1},n)$ in $C$ as a subspace of $X$. Since $\{(\omega_{1},n)\}$ is an open nbhood of $(\omega_{1}, n)$ in $Y$, $1_{X}$ is continuous at $(\omega_{1},n)$. So we come to the conclusion that $1_{X}:C\rightarrow 1_{X}(C)\subset Y$ is a continuous map. So $1_{X}(C)$ is pseudocompact in $Y$. Now the topology of $Y$ is finer than that of $X$ , the identity map $1_{Y}:Y\rightarrow X$ is continuous. This implies that $1_{Y}:1_{X}(C)\rightarrow 1_{Y}(1_{X}(C))= C$ is continuous. We can now conclude that $1_{X}:C\rightarrow 1_{X}(C)$ is a homeomorphism for each pseudocompact subset $C\subset X$. $X$ is then $Ps$-normal and by Theorem \ref{PSimpliesCCS} it is $CCS$-normal..

\begin{theorem}
If $X$ is $CC$-normal, first-countable, regular space. Then $X$ is a $CCS$-normal space.
\end{theorem}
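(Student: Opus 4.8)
The plan is to show that the very normal space $Y$ and bijection $f\colon X\to Y$ witnessing the $CC$-normality of $X$ also witness its $CCS$-normality. Since $Y$ is normal, $f$ is a bijection, and $f|_K\colon K\to f(K)$ is a homeomorphism for every countably compact $K\subseteq X$, it suffices to prove the following: \emph{every cellular-compact subspace of $X$ is countably compact}. Granting this, any cellular-compact $C\subseteq X$ is countably compact, hence $f|_C\colon C\to f(C)$ is a homeomorphism, and so $X$ is $CCS$-normal. Note that Theorem~\ref{CCisPS} by itself is not enough here: a first-countable regular \emph{pseudocompact} space need not be countably compact (the Isbell--Mr\'owka space $\Psi$ is such a space, and it is also not cellular-compact), so the full force of cellular-compactness must enter.

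To establish the displayed claim, I would first reduce it to a statement about spaces: first-countability and regularity are hereditary, so a cellular-compact $C\subseteq X$ is itself a first-countable regular cellular-compact space, and it is enough to prove that \emph{every first-countable regular cellular-compact space $C$ is countably compact}. Suppose not; then $C$ has a countably infinite closed discrete subset $D=\{x_n:n\in\omega\}$. Using regularity I would recursively construct open sets $G_n,H_n$ with $x_n\in G_n$, $\{x_m:m>n\}\subseteq H_n\subseteq H_{n-1}$ (taking $H_{-1}=C$), and $\overline{G_n}\cap\overline{H_n}=\emptyset$; this is possible because in the regular space $H_{n-1}$ the point $x_n$ and the closed set $\{x_m:m>n\}$ are disjoint. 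Since $G_m\subseteq H_n$ whenever $m>n$, the family $\{G_n:n\in\omega\}$ consists of pairwise disjoint nonempty open sets with pairwise disjoint closures.

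Next I would apply cellular-compactness to $\{G_n:n\in\omega\}$ to get a compact $K\subseteq C$ meeting every $G_n$, say $y_n\in K\cap G_n$; as $K$ is compact and first-countable, a subsequence $y_{n_j}$ converges to some $y\in K$, and then $y\in\overline{\bigcup_n G_n}$, while $y\notin\bigcup_n G_n$ (the $G_n$ are disjoint and $n_j\to\infty$) and $y\notin D$ (if $y=x_m$ then $G_m$ would be a neighbourhood of $y$, forcing $n_j=m$ eventually). To derive a contradiction it remains to arrange the construction of $\{G_n\}$ so that $\overline{\bigcup_n G_n}\subseteq\bigl(\bigcup_n G_n\bigr)\cup D$, which rules out such a $y$; this is the requirement that dictates how the $G_n$ must really be chosen, and it is where first-countability and regularity have to be exploited more carefully than above --- for instance by shrinking the $G_n$ along fixed countable neighbourhood bases at the $x_n$, or by enlarging $\{G_n\}$ to a maximal cellular family and using the density of its union. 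I expect this last point --- which is precisely the assertion that a first-countable regular cellular-compact space is countably compact, a fact in the spirit of the structure theory of cellular-compact spaces developed in \cite{T2019} --- to be the genuine obstacle, the rest being routine.
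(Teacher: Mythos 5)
Your overall strategy is exactly the paper's: take the normal space $Y$ and bijection $f$ witnessing $CC$-normality, observe that first countability and regularity are hereditary, and reduce everything to the single claim that a first-countable regular cellular-compact space is countably compact. That claim is precisely Corollary 4.2 of \cite{T2019}, which the paper simply cites and does not reprove. You correctly identify it as the crux (and your side remark is a good one: Theorem \ref{CCisPS} alone cannot suffice, since the Isbell--Mr\'owka space $\Psi$ is first countable, Tychonoff, pseudocompact, not countably compact, and indeed not cellular-compact). So modulo that one fact, your argument is complete and coincides with the paper's.

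The problem is that you attempt to prove the fact from scratch and, as you yourself concede, do not close the argument. The gap is genuine. Your construction of the pairwise disjoint $G_n$ with disjoint closures is fine, and cellular-compactness does produce a compact $K$ and points $y_{n_j}\to y$ with $y\in\overline{\bigcup_n G_n}\setminus\bigl(\bigcup_n G_n\cup D\bigr)$ --- but nothing so far is contradicted by the existence of such a $y$; what you have really shown at this stage is only that the family $\{G_n\}$ cannot be \emph{discrete}, i.e.\ feeble compactness, which is exactly the conclusion of Theorem \ref{CCisPS} and, as your own $\Psi$ example shows, is strictly weaker than what is needed. The repairs you float do not obviously work either: shrinking $G_n$ inside fixed basic neighbourhoods of $x_n$ gives no control over where the limit $y$ lands, and passing to a \emph{maximal} cellular family makes $\bigcup_n G_n$ dense, so $\overline{\bigcup_n G_n}=C$, which moves you away from the containment $\overline{\bigcup_n G_n}\subseteq\bigl(\bigcup_n G_n\bigr)\cup D$ rather than toward it. The honest fix is simply to cite Corollary 4.2 of \cite{T2019} at that point, after which your proof is the paper's proof.
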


\begin{proof}
Since $X$ is a $CC$-normal space, then there exist a normal space $Y$ and a bijection $f:X\rightarrow Y$ such that $f|_C:C\rightarrow f(C)$ is a homeomorphism, for every countably-compact subset $C$ of $X$. Take any cellular-compact subset $D$ of $X$. Then $D$ is a cellular-compact, first countable and regular space. So $D$ is countably compact, [Corollary $4.2$, \cite{T2019}]. Hence $f|_D:D\rightarrow f(D)$ is a homeomorphism. So, $X$ is a $CCS$-normal. 	
\end{proof}

\begin{theorem}
$CCS$-normality is a topological property.
\end{theorem}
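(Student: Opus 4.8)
The plan is to show that if $X$ is $CCS$-normal and $h : X \to X'$ is a homeomorphism, then $X'$ is $CCS$-normal. Since $X$ is $CCS$-normal, there exist a normal space $Y$ and a bijection $f : X \to Y$ such that $f|_C : C \to f(C)$ is a homeomorphism for every cellular-compact subset $C$ of $X$. The natural candidate for the witness to the $CCS$-normality of $X'$ is the same space $Y$ together with the bijection $g := f \circ h^{-1} : X' \to Y$.

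First I would fix an arbitrary cellular-compact subset $C'$ of $X'$. The key observation is that cellular-compactness is preserved under homeomorphisms (it is defined purely in terms of disjoint families of open sets and compact sets, all of which are homeomorphism-invariant notions), so $C := h^{-1}(C')$ is a cellular-compact subset of $X$. Then $f|_C : C \to f(C)$ is a homeomorphism by hypothesis. Next I would write $g|_{C'} = (f|_C) \circ (h^{-1}|_{C'})$ and note that $h^{-1}|_{C'} : C' \to C$ is a homeomorphism (restriction of a homeomorphism to a subspace), $f|_C : C \to f(C)$ is a homeomorphism, and $f(C) = f(h^{-1}(C')) = g(C')$. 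Composing two homeomorphisms gives that $g|_{C'} : C' \to g(C')$ is a homeomorphism.

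Since $Y$ is normal and $g : X' \to Y$ is a bijection with $g|_{C'} : C' \to g(C')$ a homeomorphism for every cellular-compact $C' \subseteq X'$, this is exactly what it means for $X'$ to be $CCS$-normal, completing the proof. I do not anticipate any real obstacle here; the only point that needs a sentence of justification is the homeomorphism-invariance of cellular-compactness, and that is immediate from the definition. If one wanted to be thorough one could also remark that $f(C) = g(C')$ as subsets of $Y$ carry the same subspace topology regardless of which map is used to describe them, but this is automatic.
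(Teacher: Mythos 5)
Your proof is correct and follows essentially the same route as the paper: the paper also takes the witness $f$ for $X$, a homeomorphism $g:X\rightarrow Z$, and declares $f\circ g^{-1}$ to be the required map. You simply spell out the details the paper leaves implicit, namely the homeomorphism-invariance of cellular-compactness and the factorization of the restricted map as a composite of homeomorphisms.
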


\begin{proof}
Let $X$ be a $CCS$-normal space and $X$ is homeomorphic to $Z$. Let $Y$ be a normal space and $f:X\rightarrow Y$ be a bijective mapping such that $f|_C:C\rightarrow f(C)$ is a homeomorphism for any cellular-compact subspace $C$ of $X$. Let $g:X\rightarrow Z$ be the homeomorphism. Then $f\circ g^{-1}:Z\rightarrow Y$is the required map.  	
\end{proof}

\begin{theorem}\label{notCCS}
If $X$ is cellular-compact but not normal, then $X$ is not $CCS$-normal.
\end{theorem}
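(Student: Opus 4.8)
The plan is to argue by contradiction, exploiting the fact that $X$ is a cellular-compact subspace of itself. Suppose, towards a contradiction, that $X$ were $CCS$-normal. Then by definition there would exist a normal space $Y$ and a bijection $f:X\rightarrow Y$ such that $f|_C:C\rightarrow f(C)$ is a homeomorphism for every cellular-compact subset $C$ of $X$.

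The key step is to apply this condition with $C=X$. Since the subspace topology on $X$ regarded as a subset of itself is exactly the original topology of $X$, and $X$ is cellular-compact by hypothesis, $X$ itself is a legitimate choice of cellular-compact subset. Because $f$ is a bijection we have $f(X)=Y$, and hence $f|_X=f:X\rightarrow Y$ is a homeomorphism. Now $Y$ is normal and normality is a topological property, so $X$ must be normal as well, contradicting the assumption that $X$ is not normal. Therefore $X$ cannot be $CCS$-normal.

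I do not expect any genuine obstacle here: the only things to check are the harmless observations that the whole space qualifies as a cellular-compact ``test subset'' in the definition, and that normality is carried across the homeomorphism $f|_X$. Both are routine, so the statement follows essentially immediately from the definition of $CCS$-normality.
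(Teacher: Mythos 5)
Your proof is correct and follows exactly the paper's argument: take $C=X$ in the definition of $CCS$-normality, conclude that $f$ itself is a homeomorphism onto the normal space $Y$, and derive a contradiction with the non-normality of $X$. Nothing further is needed.
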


\begin{proof}
Let $X$ be cellular-compact but not normal. On the contrary assume that there exist a normal space $Y$ and a bijective mapping $f:X\rightarrow Y$ such that $f|_C:C\rightarrow f(C)$ is a homeomorphism, for any cellular-compact subspace $C$ of $X$. In particular $f:X\rightarrow Y$ is a homeomorphism. This is a  contradiction as $Y$ is normal and $X$ is not normal. Hence $X$ is not $CCS$-normal.	
\end{proof}

The following example is an example of a $CC$-normal space which is not $CCS$-normal.

\begin{example}\label{CCbutnotCCS}
Consider $(\mathbb{R},\tau_c)$, where $\tau_c$ is the co-countable topology on $\mathbb{R}$. Then $(\mathbb{R},\tau_c)$ is $CC$-normal,[ Example 2.6, \cite{KA2017} ]. Since there is no disjoint family of open sets in $(\mathbb{R},\tau_c)$, so $(\mathbb{R}.\tau_c)$ is cellular-compact (vacuously). It is known that $(\mathbb{R}, \tau_c)$ is not normal. Hence by Theorem \ref{notCCS}, $(\mathbb{R},\tau_c)$ is not $CCS$-normal. Also every $CC$-normal space is $C$-normal. Therefore $(\mathbb{R},\tau_c)$ is $C$-normal but not $CCS$-normal.
\end{example}

Now we give an example of a $CCS$-normal space which is not $Ps$-normal.

\begin{example}\label{CCSbutnotPs}
Let $X= [0,\omega_{1})\times [0,\omega_{1}]$ be equipped with the product of the respective order topologies. Then

\begin{itemize}
\item[1)] $X$ is countably compact , because $\Omega=[o,\omega_{1})$ is countably compact and $\Omega_{0}=[0,\omega_{1}]$ is compact.

\item[2)] $X$ is locally compact, because $\Omega$ is locally compact and $\Omega_{0}$ is compact.

\item[3)] $X$ is not normal.
\end{itemize}

Following steps are needed to prove that $X$ is not normal.

\begin{itemize}
 \item[a)] Interlacing Lemma: Let $\{\alpha_{n}, n\in \mathbb{N}\}$ and $\{\beta_{n}, n\in \mathbb{N}\} $ be two sequences in $\Omega=[0,\omega_{1})$ such that $\alpha_{n}\leq \beta_{n}\leq \alpha_{n+1}$ for each $n\in \mathbb{N}$ . Then both sequence converge and to the same point of $\Omega$

 \item[b)] If $f:\Omega \mapsto \Omega$ is a function such that $\alpha \leq f(\alpha)$ for each $\alpha$, then for some $\alpha \in \Omega$, the point $(\alpha,\alpha)$ is an accumulation point of the graph $G(f)= \{(\alpha, f(\alpha)): \alpha\in \Omega\}$ of $f$.
\end{itemize}

\begin{proof}
Let $\alpha$ be any element of $\Omega$. Let $\alpha_{1} = \alpha, \alpha_{2}= f(\alpha_{1}),......\alpha_{n+1}= f(\alpha_{n}) , n\in \mathbb{N}$. Now $\alpha_{1}\leq f(\alpha_{1})= \alpha_{2}\leq f(\alpha_{2})\leq \alpha_{3}.....$. Let $\beta_{n}= f(\alpha_{n}), n\in \mathbb{N}$. Applying Interlacing Lemma to sequences $\{\alpha_{n}, n\in\mathbb{N}\}, \{\beta_{n}, n\in	\mathbb{N}\}$ we obtain a point $\alpha_{0}\in \Omega$. Consider the point $(\alpha_{0},\alpha_{0})\in X$ and any open nbhod of $(\alpha_{0},\alpha_{0})$ of the form $(\gamma,\alpha_{0}]\times (\gamma,\alpha_{0}]$, where $0\leq \gamma \leq \alpha_{0}$. Since $\alpha_{0}= \lim\alpha_{n}= \lim \beta_{n}$, there exists $n_{0}\geq 1$ such that $\gamma < \alpha_{n}\leq \alpha_{0}, \gamma < \beta_{n}\leq \alpha_{0}$ for $n\geq n_{0}$. Then $(\alpha_{n},f(\alpha_{n}))= (\alpha_{n},\beta_{n})\in (\gamma,\alpha_{0}]\times (\gamma,\alpha_{0}]$ for $n\geq n_{0}$. This shows that $(\alpha_{0},\alpha_{0})$ is an accumulation point of $G(f)= \{(\alpha, f(\alpha)): \alpha\in \Omega\}$. Now in order to show that $X$ is not normal, let $A= \{(\alpha,\alpha): \alpha\in \Omega\}$ and $B= [0,\omega_{1})\times \{\omega_{1}\}$  be two disjoint closed subsets of $X$ . Assume $U$ is an open nbhood of $A$ and $V$ is an open nbhod of $B$ and $U\cap V=\emptyset$.

Claim: For any $\alpha\in \Omega$ there exists $\alpha<\beta\in\Omega$ such that $(\alpha,\beta)\notin U$. If not there exists $\alpha_{0}\in \Omega$ such that $(\alpha_{0},\beta)\in U$ for all $\beta>\alpha_{0}$. Now $(\alpha_{0},\omega_{1})\in B$. There exists $\gamma, \delta \in \Omega$ such that $ (\alpha_{0},\omega_{1})\in (\gamma, \alpha_{0}]\times (\delta,\omega_{1}]\subset V $, where $\gamma<\alpha_{0}, \delta<\omega_{1}$. Choose $\beta<\omega_{1}$ such that $\alpha_{0}<\beta,\delta<\beta$. Now $(\alpha_{0},\beta)\in (\gamma,\alpha_{0}]\times (\delta,\omega_{1}]\subset V$. Since $\alpha_{0}<\beta, (\alpha_{0},\beta)\in U$ also $(\alpha_{0},\beta)\in V$. i.e, $U\cap V\neq \emptyset$, contradiction. This settles the claim. Define $f:\Omega\mapsto \Omega$ by $f(\alpha)= min\{\beta\in \Omega: \alpha<\beta , (\alpha,\beta)\notin U\}$. Then $\alpha<f(\alpha)$ for each $\alpha\in \Omega $. By $(b)$ there exists $\alpha_{0}\in\Omega $such that $(\alpha_{0},\alpha_{0})$ is an accumulation point of $G(f)= \{(\alpha,f(\alpha)): \alpha\in \Omega\}\subset U^{c}$ i.e,$(\alpha_{0},\alpha_{0})\in \overline{G(f)}\subset U^{c}$ since $U^{c}$ is closed. Contradiction because $(\alpha_{0},\alpha_{0})\in A\subset U$, we conclude that $X$ is not normal. Because of $(1)$ $X$ is not $CC$-normal and because of $(2)$ $X$ is $C$-normal. As $X$ is countably compact, $X$ is Pseudocompact but not normal. So $X$ is not PS-normal.

$(4)$:   $X$ is $CCS$-normal: $X$ is not cellular-compact. This is because $X$ has a dense subset of isolated points. Let $A\subset X$ be a cellular-compact subspace. Let $p_{1}:X\mapsto \Omega, p_{2}:X\mapsto \Omega_{0}$ be two projections.Then $A\subset p_{1}(A)\times p_{2}(A)$. Now $p_{1}(A)$ is a cellular-compact in $\Omega$. We claim that $p_{1}(A)$ is contained in a compact subset.

Case$(i): p_{1}(A)$ is bounded set in $\Omega$. Then for $\alpha<\omega_{1}, p_{1}(A)\subset[0,\alpha]$ and $[0,\alpha ] $ is a compact subspace.

Case$(ii): p_{1}(A)$ is unbounded . Then $p_{1}(A)$ contains a dense subspace of isolated points of $p_{1}(A)$ and must be compact . But unbounded subset of $\Omega$ cannot be compact. Assume $p_{1}(A)\subset K $ a compact subset of $\Omega$. $A\subset p_{1}(A)\times p_{2}(A)\subset K\times \Omega_{0}$ and $K\times \Omega_{0}$ is compact in $X$. Since $X$ is $C$-normal and every cellular-compact subset is contained in a compact subspace , $X$ becomes $CCS$-normal.

 Thus we obtain $X$ as an example of a $CCS$-normal space which is not $Ps$-normal.	
\end{proof}	
\end{example}

\begin{theorem}
If $X$ is $CCS$-normal and Frechet, $f:X\rightarrow Y$ is witness of $CCS$-normality, Then $f$ is a continuous mapping.
\end{theorem}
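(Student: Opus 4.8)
The plan is to show that $f$ is sequentially continuous and then upgrade this to continuity using the fact that a Fréchet space is sequential, so that for a map defined on $X$ sequential continuity is equivalent to continuity.

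First I would fix an arbitrary sequence $(x_n)_{n\in\mathbb{N}}$ in $X$ converging to a point $x\in X$, and put $C=\{x_n:n\in\mathbb{N}\}\cup\{x\}$. A convergent sequence together with its limit is a compact subspace of $X$, and every compact space is trivially cellular-compact, since the whole space already meets every member of any disjoint family of non-empty open sets. Hence $C$ is a cellular-compact subset of $X$.

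Since $f$ is a witness of the $CCS$-normality of $X$, the restriction $f|_C\colon C\to f(C)$ is a homeomorphism, in particular a continuous map. As $x_n\to x$ inside the subspace $C$, continuity of $f|_C$ gives $f(x_n)\to f(x)$ in $f(C)$, and therefore also in $Y$, because convergence in a subspace implies convergence in the ambient space. Thus $f$ is sequentially continuous. Finally, because $X$ is Fréchet it is sequential, so a subset of $X$ is closed exactly when it is sequentially closed; consequently every sequentially continuous map from $X$ into an arbitrary space is continuous. Hence $f\colon X\to Y$ is continuous.

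I do not expect a genuine obstacle here: the whole point is that $CCS$-normality pins down the behaviour of $f$ on convergent sequences (via the homeomorphism on the compact set $C$), and the Fréchet hypothesis converts this into honest continuity. The only steps that require any care are the standard facts that a convergent sequence with its limit is compact, that compact implies cellular-compact, and that maps out of a sequential space are continuous iff sequentially continuous.
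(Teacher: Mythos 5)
Your proof is correct and rests on the same key idea as the paper's: a convergent sequence together with its limit is compact, hence cellular-compact, so the witness $f$ restricted to it is a homeomorphism and therefore preserves the convergence $x_n\to x$. The only difference is in the final packaging — the paper verifies $f(\overline{A})\subseteq\overline{f(A)}$ directly, using the Fr\'echet property to extract a sequence from $A$ and a density argument inside $f(K)$, whereas you establish sequential continuity and then invoke the standard fact that a sequentially continuous map out of a sequential (in particular Fr\'echet) space is continuous; both routes are sound and essentially equivalent.
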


\begin{proof}
It is sufficient to show that for any $A\subset X$, $f(\overline{A})\subset \overline{f(A)}$ . Let $\emptyset \neq A\subset X$ and  $y\in f(\overline{A})$. Let $x$ be the unique point in $\overline{A}$ such that $f(x)=y$. Since $X$ is Frechet, then there exists a sequence $\{x_{n}, n\geq 1\}$ in $A$ such that $x = \lim_{n\to \infty}x_{n}$ . Let $K=\{x_{n}:n\geq 1\}\cup \{x\}$ .Then $K$ is compact and therefore $K$ is cellular-compact. Then $f|_K:K\rightarrow f(K)$ is a homeomorphism, as $X$ is $CCS$-Normal. Let $U$ be any open set in $Y$ containing $y$ . Then $U\cap f(K)$ is open set in $f(K)$ containing $y$. Now $\{x_{n} : n\geq 1\}$ is dense in $K$ and is contained in $K\cap A\Longrightarrow K\cap A $ is dense in $K\Longrightarrow f(K\cap A)$ is dense in $f(K)\Longrightarrow f(K\cap A)\cap f(K)\cap U\neq \emptyset \Longrightarrow   U\cap f(A)\neq \emptyset$. Therefore $y\in \overline{f(A)}$. Hence $f(\overline{A})\subset \overline{f(A)}$.  	
\end{proof}

\begin{theorem}\label{sum}
Let $X=\bigoplus_{\alpha\in \Lambda}X_\alpha$ and $C\subseteq X$. Then $C$ is cellular-compact if and only if $\Lambda_\circ=\{\alpha\in \Lambda : C\cap X_\alpha\neq \emptyset\}$ is finite and $C\cap X_\alpha$ is cellular compact in $X_\alpha$ for each $\alpha\in \Lambda_\circ$.
\end{theorem}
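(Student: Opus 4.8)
The plan is to begin with the structural observation that, since each $X_\alpha$ is clopen in $X$, the trace $C\cap X_\alpha$ is clopen in the subspace $C$; consequently $C$ carries exactly the topology of the topological sum $\bigoplus_{\alpha\in\Lambda_\circ}(C\cap X_\alpha)$, and each $C\cap X_\alpha$ is simultaneously a subspace of $X_\alpha$ and a clopen subspace of $C$. Both implications then reduce to the familiar ``split the cellular family / patch the compact witnesses'' technique, using only clopenness of the $X_\alpha$ and finiteness of $\Lambda_\circ$.

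For the ``if'' direction I would take an arbitrary disjoint family $\mathcal{U}$ of non-empty open subsets of $C$, and for each $\alpha\in\Lambda_\circ$ form $\mathcal{U}_\alpha=\{\,U\cap(C\cap X_\alpha):U\in\mathcal{U},\ U\cap(C\cap X_\alpha)\neq\emptyset\,\}$, which is again a disjoint family of non-empty open subsets, now of the cellular-compact space $C\cap X_\alpha$. Choosing a compact $K_\alpha\subseteq C\cap X_\alpha$ meeting every member of $\mathcal{U}_\alpha$, the finite union $K=\bigcup_{\alpha\in\Lambda_\circ}K_\alpha$ is compact; and since every non-empty $U\in\mathcal{U}$ meets $C=\bigcup_{\alpha\in\Lambda_\circ}(C\cap X_\alpha)$, it meets some $C\cap X_\alpha$, whence $K\cap U\supseteq K_\alpha\cap(U\cap(C\cap X_\alpha))\neq\emptyset$. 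So $C$ is cellular-compact.

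For the ``only if'' direction, assume $C$ is cellular-compact. First, each $C\cap X_\alpha$ is cellular-compact: given a disjoint family $\mathcal{V}$ of non-empty open subsets of $C\cap X_\alpha$, these are also open in $C$, so there is a compact $K\subseteq C$ meeting each $V\in\mathcal{V}$; then $K\cap X_\alpha$ is compact (being closed in $K$, as $X_\alpha$ is clopen), lies in $C\cap X_\alpha$, and still meets each $V$ because $V\subseteq X_\alpha$. Second, $\Lambda_\circ$ is finite: the family $\{\,C\cap X_\alpha:\alpha\in\Lambda_\circ\,\}$ is a disjoint family of non-empty open subsets of $C$, so it has a compact witness $K\subseteq C$; since $\{X_\alpha:\alpha\in\Lambda\}$ is an open cover of $X\supseteq K$, finitely many $X_{\beta_1},\dots,X_{\beta_m}$ cover $K$, and pairwise disjointness of the $X_\alpha$ forces $\Lambda_\circ\subseteq\{\beta_1,\dots,\beta_m\}$.

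The only delicate point, and the step I would watch most carefully, is keeping each compact witness inside the correct subspace: in the ``only if'' part one must pass from a compact subset of $C$ to a compact subset of the piece $C\cap X_\alpha$ by intersecting with the clopen set $X_\alpha$, and in the ``if'' part one must verify that the finite union of the $K_\alpha$ is genuinely a compact subset of $C$ meeting every original member of $\mathcal{U}$, including those members that spread across several summands. Neither obstacle is serious, but both depend essentially on clopenness of the $X_\alpha$ together with the finiteness of $\Lambda_\circ$.
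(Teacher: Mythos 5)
Your proposal is correct and follows essentially the same route as the paper: split a disjoint open family of $C$ along the clopen pieces $C\cap X_\alpha$ and take the (finite) union of the compact witnesses for the ``if'' direction, and for the ``only if'' direction intersect a compact witness with the clopen $X_\alpha$ and use the disjoint family $\{C\cap X_\alpha:\alpha\in\Lambda_\circ\}$ to force $\Lambda_\circ$ finite. The only cosmetic difference is that the paper derives finiteness by noting the cover $\{C\cap X_\alpha\}$ of the witness $K$ admits no finite subcover, whereas you extract a finite subcover of $\{X_\alpha\}$ and use disjointness; these are the same argument.
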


\begin{proof}
Let $C$ be cellular-compact. If possible let $\Lambda_\circ$ be infinite set. Then $\{C\cap X_\alpha :\alpha\in \Lambda_\circ\}$ is a disjoint family of open sets in $C$. So there exists a compact set $K$ in $C$ such that $K\cap C\cap X_\alpha\neq \emptyset$ for all $\alpha \in \Lambda_\circ$. Thus $\{C\cap X_\alpha : \alpha \in\Lambda_\circ\}$ is an open cover of $K$ which has no finite subcover, which is a contradiction as $K$ is compact. Therefore $\Lambda_\circ$ is finite.

Let $\alpha\in \Lambda_\circ$. We show $C\cap X_\alpha$ is cellular-compact. Let $\{U_\gamma : \gamma\in I\}$ be a disjoint family of open sets in $C\cap X_\alpha$. Now each $U_\gamma$ is open in $C$ as $C\cap X_\alpha$ is open in $C$. Then there exists a compact subset$K$ of $C$ such that $K\cap U_\gamma\neq \emptyset$ as $C$ is cellular-compact. $K$ is compact in $C$ then $K\cap X_\alpha$ is compact in $C\cap X_\alpha$ and also $(K\cap X_\alpha)\cap U_\gamma\neq \emptyset$ for all $\gamma\in I$. Therefore $C\cap X_\alpha$ is cellular compact for all $\alpha\in \Lambda_\circ$.

Conversely, let the given condition hold. Let $\{U_\gamma :\gamma\in I\}$ be a disjoint family of open sets in $C$. For $\alpha\in \Lambda_\circ$, $\{U_\gamma\cap X_\alpha: \gamma\in I\}$, if non-empty, is a disjoint family of open sets in $C\cap X_\alpha$. This implies there exists a compact set $K_\alpha$ in $C\cap X_\alpha$ such that $K_\alpha \cap (U_\gamma\cap X_\alpha)\neq \emptyset$ for all $\gamma\in I$. Let $K=\bigoplus_{\alpha\in \Lambda_\circ}K_\alpha$. Then $K$ is a compact set in $C$ such that $K\cap U_\gamma\neq \emptyset$ for all $\gamma\in I$. Hence $C$ is cellular compact.
\end{proof}

\begin{theorem}
$CCS$-normality is an additive property.
\end{theorem}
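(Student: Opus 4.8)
The plan is to assemble the witnessing normal space for $X=\bigoplus_{\alpha\in\Lambda}X_\alpha$ out of the witnesses for the individual summands. For each $\alpha\in\Lambda$, since $X_\alpha$ is $CCS$-normal, fix a normal space $Y_\alpha$ and a bijection $f_\alpha:X_\alpha\rightarrow Y_\alpha$ such that $f_\alpha|_D:D\rightarrow f_\alpha(D)$ is a homeomorphism for every cellular-compact $D\subseteq X_\alpha$. Put $Y=\bigoplus_{\alpha\in\Lambda}Y_\alpha$ and let $f:X\rightarrow Y$ be the map agreeing with $f_\alpha$ on each $X_\alpha$; it is plainly a bijection.

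First I would check that $Y$ is normal. Given disjoint closed sets $E,F\subseteq Y$, each pair $E\cap Y_\alpha$, $F\cap Y_\alpha$ consists of disjoint closed subsets of the normal space $Y_\alpha$, hence can be separated by disjoint open sets $U_\alpha,V_\alpha\subseteq Y_\alpha$; then $\bigcup_\alpha U_\alpha$ and $\bigcup_\alpha V_\alpha$ are disjoint open subsets of $Y$ separating $E$ and $F$. (This is the standard fact that a topological sum of normal spaces is normal.)

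Next, let $C\subseteq X$ be cellular-compact. By Theorem \ref{sum}, the set $\Lambda_\circ=\{\alpha\in\Lambda:C\cap X_\alpha\neq\emptyset\}$ is finite and $C_\alpha:=C\cap X_\alpha$ is cellular-compact in $X_\alpha$ for each $\alpha\in\Lambda_\circ$. Since each $X_\alpha$ is clopen in $X$, the sets $C_\alpha$ are clopen in $C$, so $C=\bigoplus_{\alpha\in\Lambda_\circ}C_\alpha$ as a subspace of $X$; likewise, since each $Y_\alpha$ is clopen in $Y$ and $f(C)\cap Y_\alpha=f_\alpha(C_\alpha)$, we have $f(C)=\bigoplus_{\alpha\in\Lambda_\circ}f_\alpha(C_\alpha)$ as a subspace of $Y$. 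By the choice of $f_\alpha$, each $f_\alpha|_{C_\alpha}:C_\alpha\rightarrow f_\alpha(C_\alpha)$ is a homeomorphism, and $f|_C$ is exactly the sum of these finitely many homeomorphisms; a topological sum of homeomorphisms is a homeomorphism, so $f|_C:C\rightarrow f(C)$ is a homeomorphism. Thus $f$ witnesses the $CCS$-normality of $X$.

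The only things requiring care are routine: that a sum of normal spaces is normal and that a sum of homeomorphisms is a homeomorphism. The real content is the appeal to Theorem \ref{sum}, which guarantees that a cellular-compact subset of the sum lives inside finitely many summands and is cellular-compact summand by summand — precisely what lets the local witnesses be glued. So I do not expect a genuine obstacle here; the one point to watch is that the subspace topology on $f(C)$ coincides with the sum topology, which holds because the $Y_\alpha$ are clopen in $Y$.
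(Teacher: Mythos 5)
Your proposal is correct and follows essentially the same route as the paper: build $Y=\bigoplus_\alpha Y_\alpha$ and $f=\bigoplus_\alpha f_\alpha$ from the summand witnesses and invoke Theorem \ref{sum} to reduce a cellular-compact $C\subseteq X$ to finitely many cellular-compact pieces $C\cap X_\alpha$. You simply supply the routine details (normality of the sum, and that $f|_C$ is a finite sum of homeomorphisms between clopen pieces) that the paper leaves implicit.
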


\begin{proof}
Let $X = \bigoplus_{\alpha \in \Lambda}X_{\alpha}$ and each $X_{\alpha}$ is cellular-compact. Then for each $\alpha \in \Lambda$ there exists a normal space $Y_{\alpha}$ and a bijective mapping $f_{\alpha }:X_{\alpha}\rightarrow Y_{\alpha}$ such that $f_{\alpha}|_{C_{\alpha}}:C_{\alpha}\rightarrow f_{\alpha}(C_{\alpha})$ is a homeomorphism for each cellular-compact subspace $C_{\alpha}$ of $X_{\alpha}$. Then $Y =\bigoplus_{{\alpha}\in {\Lambda}}Y_{\alpha}$ is a normal space. Consider the function $f =\bigoplus_{\alpha\in \Lambda}f_{\alpha}:X\rightarrow Y$. Let $C$ be a cellular -compact
subspace of $X$. Then by Theorem \ref{sum}
, $\Lambda_\circ = \{\alpha \in \Lambda :C\cap X_{\alpha}\neq \emptyset\}$ is finite and $C\cap X_{\alpha}$ is cellular -compact for each $\alpha \in \Lambda$ Then $f|_{C}:C\rightarrow f(C)$ is a homeomorphism. Hence $X$ is $CCS$-normal.	
\end{proof}

\section{Some special results}

\begin{definition}
 A topological space $X$ is called submetrizable if the topology admits of a weaker metrizable topology.
\end{definition}

It is proved in \cite{AK2017} that submetrizable space is $C$-normal. In \cite{KA2017}, the authors raised a question. Is a submetrizable space $CC$-normal? We can claim that the answer is affirmative.

\begin{theorem}
Every submetrizable space is $CC$-normal.
\end{theorem}

In order to prove this we need the following Lemmas.

\begin{lemma}\label{closed}
Let $X$ be a first countable Hausdorff space. If $A\subset X$ is countably compact, then $A$ is closed in $X$.
\end{lemma}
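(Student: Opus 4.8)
The plan is to prove the standard fact that a countably compact subset of a first countable Hausdorff space is closed, by showing that its complement is open; equivalently, that $A$ contains all of its limit points. So first I would take a point $x \in \overline{A}$ and aim to show $x \in A$. Since $X$ is first countable, I would fix a decreasing countable local base $\{B_n : n \geq 1\}$ at $x$ (replacing a given base by finite intersections to get nesting). Because $x \in \overline{A}$, each $B_n$ meets $A$, so I can choose $x_n \in B_n \cap A$; the nesting of the $B_n$ guarantees $x_n \to x$ in $X$.

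Next I would use countable compactness of $A$. The set $S = \{x_n : n \geq 1\}$ lies in $A$; if $S$ is finite, then some value is repeated infinitely often and, since $x_n \to x$ in the Hausdorff space $X$, that repeated value must equal $x$, giving $x \in A$ immediately. If $S$ is infinite, then by countable compactness $S$ has an accumulation point $p \in A$. The key step is then to argue $p = x$: in a first countable Hausdorff space, a sequence can accumulate at only one point, and it already converges to $x$, so $p = x$. Concretely, if $p \neq x$, pick disjoint open sets separating $p$ and $x$; the one around $x$ contains all but finitely many $x_n$ (convergence), so the one around $p$ contains only finitely many terms of the sequence, contradicting that $p$ is an accumulation point of the infinite set $S$. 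Hence $p = x \in A$.

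Therefore $\overline{A} \subseteq A$, so $A$ is closed. The only mildly delicate point — the ``main obstacle,'' though it is routine — is handling the bookkeeping carefully: namely reducing to a nested local base so that the chosen points actually converge, and distinguishing the finite-range case from the infinite-range case so that ``accumulation point of the set'' can be upgraded to ``limit of the sequence.'' Both Hausdorffness (uniqueness of limits / separation) and first countability (existence of a convergent sequence witnessing closure) are genuinely used, which is consistent with the hypotheses in the statement.
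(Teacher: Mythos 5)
Your proof is correct, but it takes a genuinely different route from the paper's. You argue sequentially: first countability at a point $x\in\overline{A}$ yields a sequence in $A$ converging to $x$; countable compactness of $A$ (in its ``every infinite subset has an accumulation point'' form) produces a cluster point $p\in A$ of that sequence's range; and Hausdorffness forces $p=x$, so $A$ is sequentially closed and hence closed. The paper instead runs a covering argument modelled on the classical proof that compact subsets of Hausdorff spaces are closed: for $x\notin A$ it separates $x$ from each $y\in A$ by a \emph{basic} neighbourhood $U_{n(y)}$ of $x$ and a disjoint open $V_{n(y)}\ni y$; first countability at $x$ (not at points of $A$) makes the resulting cover of $A$ effectively countable, a finite subcover is extracted by countable compactness, and the intersection of the corresponding finitely many $U_{n(y_i)}$ is a neighbourhood of $x$ missing $A$. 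The paper's version uses only the finite-subcover definition of countable compactness and no sequences; yours trades that for the accumulation-point characterization plus uniqueness of limits. Two small points in your write-up deserve a sentence each if you flesh it out: (i) when you conclude that the neighbourhood of $p$ ``contains only finitely many terms'' contradicts accumulation, you are implicitly using that in a $T_1$ space an accumulation point of a set is an $\omega$-accumulation point (shrink the neighbourhood to exclude the finitely many offending points); (ii) the accumulation point given by countable compactness lives in the subspace $A$, and you should note that for $S\subseteq A$ this is the same as accumulation in $X$. Both are routine and do not affect correctness.
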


\begin{proof}
Let $\emptyset \neq A \neq X$ be countably compact subspace. Let $\{U_{n} : n\geq 1\}$ be a countable local base at $x\notin A$. For each $y\in A$ there exist open sets $U_{n(y)}, V_{n(y)}$ such that $x\in U_{n(y)}, y\in V_{n(y)}$ and $U_{n(y)}\cap V_{n(y)}=\emptyset$. Now $\{V_{n(y)} : y\in A \}$ is , infact , a countable open cover of $A$ and hence , admits of a finite subcover, say $\{V_{n(y_{i})} : 1\leq i\leq k\}$. Look at $U_{x} = \bigcap_{i=1}^{k}U_{n(y_{i})}$. Then $U_{x}\cap A=\emptyset $ and $ x\in U_{x}$ an open neighbourhood of $x$ . Thus $X-A$ is open $\Longrightarrow A$ is closed.	
\end{proof}

\begin{lemma}\label{homeomorphism}
If $X$ be a countably compact space, $Y$ is first countable Hausdorff space and $f:X\longrightarrow Y$ is a continuous bijection. Then $f$ is a homeomorphism.
\end{lemma}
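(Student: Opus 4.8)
The plan is to prove that $f$ is a closed map; since $f$ is already a continuous bijection, this will immediately give that $f^{-1}$ is continuous and hence that $f$ is a homeomorphism. So the whole argument reduces to showing: for every closed $C\subseteq X$, the image $f(C)$ is closed in $Y$.

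First I would recall two standard facts about countable compactness. A closed subspace of a countably compact space is countably compact: if $\{U_n : n\geq 1\}$ is a countable open cover of a closed set $C\subseteq X$, then $\{U_n : n\geq 1\}\cup\{X\setminus C\}$ is a countable open cover of $X$, from which a finite subcover can be extracted, and discarding $X\setminus C$ leaves a finite subcover of $C$. Also, the continuous image of a countably compact space is countably compact: pulling back a countable open cover of $f(X')$ along $f$ gives a countable open cover of $X'$, whose finite subcover pushes forward. These are routine and I would state them without belaboring the verifications.

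Now take an arbitrary closed set $C\subseteq X$. By the first fact, $C$ is countably compact. By the second fact, $f(C)$ is countably compact as a subspace of $Y$. Since $Y$ is first countable and Hausdorff, Lemma \ref{closed} applies to the countably compact subspace $f(C)$, so $f(C)$ is closed in $Y$. Thus $f$ carries closed sets to closed sets, i.e. $f$ is a closed map, and therefore the bijection $f$ is a homeomorphism.

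There is no real obstacle here; the content is entirely in Lemma \ref{closed}, which has already been established, together with the two hereditary/image properties of countable compactness. The only point worth flagging is the direction of the hypotheses: first countability and Hausdorffness are needed \emph{on the codomain} $Y$ (not on $X$), precisely because Lemma \ref{closed} is invoked for subspaces of $Y$; and countable compactness is needed \emph{on the domain} $X$ so that closed subsets, and then their images, inherit it.
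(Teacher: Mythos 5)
Your proof is correct and follows essentially the same route as the paper's: reduce to showing $f$ is a closed map, note that a closed subset of $X$ is countably compact and so is its continuous image, then invoke Lemma \ref{closed} to conclude that image is closed in the first countable Hausdorff space $Y$. The only difference is that you spell out the two standard facts about countable compactness, which the paper takes for granted.
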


\begin{proof}
Since $f$ is a continuous bijection, suffices to show that $f$ is a closed map. Let $A\subset X$ be  closed. Then $A$ is countably compact and ipso facto, $f(A)$ is countably compact in $Y$. Since $Y$ is first countable , Hausdorff, $f(A)$ is closed in $Y$, i.e, $f$ is a closed map. Consequently $f$ is a homeomorphism.   	
\end{proof}

\begin{theorem}
Every submetrizable space is $CC$-normal.
\end{theorem}

\begin{proof}
Let $(X , \tau )$ be a submetrizable space. i.e, there exists a topology $\tau^{'}\subset \tau$ such that $(X , \tau^{'})$ is metrizable. Let $1_{X}:(X,\tau)\longrightarrow (X , \tau^{'})$ be the identity map which is continuous and bijective. $(X, \tau^{'})$ is normal being metrizable. Take any countably compact subspace $C\subset (X , \tau)$. $1_{X}(C)$ is countably compact in $(X , \tau^{'})$ and $1_{X}:C\longrightarrow 1_{X}(C)$ is a continuous , bijection. By Lemma \ref{homeomorphism}, $1_{X}$ is a homeomorphism, so that $(X, \tau)$ is CC-normal. Further note that $1_{X}(C)$ being a countably compact subset of a metrizable space , is indeed compact which renders $C$ into a compact subset of $(X,\tau)$. This yields the further information that in a submetrizable space ,  compact and countably compact subsets coincide. 	
\end{proof}

\begin{example}
Example \ref{notcellularcompact} produces an example of a submetrizable space which is not cellular-compact. It is proved in \cite{T2019}  that a cellular-compact , first-countable ,$T_{3}$ space is countably-compact and ipsofacto, closed in view of Lemma \ref{closed}. We can conclude that in a metrizable space all three concepts , viz, Compactness,countable-compactness and Cellular-compactness coincide.
\end{example}

\begin{definition}
Let $M$ be a non-empty proper subset of a topological space $( X,\tau)$. Define a new topology $\tau_{(M)}$ on $X$ as follows: $\tau_{(M)}= \{U\cup K: U\in \tau$ and $ K\subset X\setminus M\} , (X, \tau_{(M)})$ is called a discrete extension of $(X, \tau)$ and we denote it by $X_M$.		
\end{definition}

Discrete extension of a cellular-compact space need not be cellular compact. This can be shown in the following example.

\begin{example}\label{notcellularcompact}
Let $X = [0,1]$ with subspace topology of usual topology on $\mathbb{R}$ and $M= X\setminus \{\frac{1}{n} : n\in \mathbb{N}\}\cup \{0\}$. Then $X$ is compact $\Longrightarrow X$ is cellular-compact.\qquad\\Claim: $X_M$ is not cellular-compact. Let $\mathcal{U}= \{(\frac{1}{n+1},\frac{1}{n}) : n\in \mathbb{N}\}\cup \{\{\frac{1}{n}\} : n\in \mathbb{N}\}\cup \{0\}$ is a disjoint family of open sets in $X_M$. There does not exists any compact set $K$ in $X_M$ such that $K\cap U\neq \emptyset$, for all $U\in \mathcal{U}$. Therefore $X_M$ is not cellular-compact.
\end{example}

Let $X$ be any topological space. Let $X'=X\times \{1\}$. Let $A(X)=X\cup X'$. For $x\in X$ we denote $<x,1>$ as $x'$ and for $B\subseteq X$ let $B'=\{x':x\in B\}=B\times \{1\}$. Let $\{\beta (x): x\in X\}\cup \{\beta (x'):x'\in X'\}$ be neighbourhood system for some topology $\tau$ on $A(X)$, where for $x\in X, \beta (x)=\{ U\cup (U'\setminus\{x'\}): U$ is an open set in $X$ containing $x\}$ and for $x'\in X', \beta (x')=\{\{x'\}\}$. $(A(X),\tau)$ is called Alexandroff Duplicate of $X$.

OBSERVATION: Let $X$ be a topological space and $A(X) = X\cup X^{'}$ is the Alexandroff duplicate space. Let $C\subset A(X)$ be a compact subspace of $A(X)$. Let $C\cap X= D, C\cap X^{'}= E$. If $D=\emptyset $, then $E= C\subset X^{'} \Longrightarrow C$ is finite. Suppose $D\neq \emptyset$. For $x\in D$ let $U_{x}\cup U_{x}^{'}\setminus \{x^{'}\}$ be an open neighbourhood of $x$ . Consider the family $\{ U_{x}\cup U_{x}^{'}\setminus \{x^{'}\}: x^{'}\in D\}\cup \{ \{x'\}: x'\in E\}$ of open subsets of $A(X)$, which is an open cover of $C$. Let $\{U_{x_{i}}\cup U_{x_{i}}^{'}\setminus \{x_{i}^{'}\}: 1\leq i\leq n \}\cup \{\{y_{j}\}: y_{j}\in E, 1\leq j \leq m\}$ be a finite subcover of $C= D\cup E$. Then $C\cap X= D\subset \bigcup_{i=1}^{n}U_{x_{i}}= U\Longrightarrow U_{x_{i}}^{'}\setminus \{x_{i}^{'}\}\subset U_{x_{i}}^{'}\Longrightarrow \bigcup_{i=1}^{n}U_{x_{i}}^{'}\setminus \{x_{i}^{'}\}\subset \bigcup_{i=1}^{n}U_{x_{i}}^{'}= U^{'}.$ Hence $C= D\cup E\subset U^{'}\cup \{\{y_{j}\} :y_{j}\in E, 1\leq j\leq m\}\cup U\Longrightarrow E\subset U^{'}\cup \{\{y_{j}\}: y_{j}\in E, 1\leq j\leq m\}\Longrightarrow E-U^{'}$ is a finite subset of $X^{'}$. So, we conclude that: A subset $C\subset A(X) $ is compact $\Longrightarrow$ for any open set $U\supset $ compact set $C\cap X, C\cap X^{'}-U^{'}$ is a finite subset of $X^{'}$. Conversely let $C\subset A(X)$ be a subset such that $C\cap X$ is compact and for any open set $U\subset X , C\cap X\subset U, (C\cap X^{'})\setminus U^{'}$ is a finite subset of $X^{'}$.\qquad\\ Claim: $C$is compact. It suffices to prove that any open cover of $C$ by means of basic open sets of $A(X)$ has a finite subcover. Let $\{ U_{x}\cup U_{x}^{'}\setminus \{x^{'}\}: x\in C\cap X\}\cup \{\{x^{'}\}; x^{'}\in C\cap X^{'}\}$ be  an open cover of $C$. Since $C\cap X$ is compact and $\{U_{x}: x\in C\cap X\}$ is an open cover $C\cap X$ in $X$. There exists a finite subcover, say, $\{U_{x_{i}}: i\leq i \leq n\}$ ie, $C\cap X \subset \bigcup_{i=1}^{n}U_{x_{i}}= U $(say). By hypothesis $(C\cap X^{'})\setminus U^{'}$ is a finite subset of $X^{'}$, say, $\{y_{1},y_{2},\dots ,y_{m}\}$. Hence $C\cap X^{'}= U^{'}\cup \{y_{1},y_{2},\dots ,y_{m}\}$. Let $D= \{y_{1},y_{2},\dots ,y_{m}\}\cup \{x_{i}^{'}\in C\cap X^{'}: 1\leq i\leq n\}$. Now $\{U_{x_{i}}\cup U_{x_{i}}^{'}\setminus \{x_{i}^{'}\}: 1\leq i\leq n\}\cup \{\{x_{i}^{'}\}: x_{i}^{'}\in C\cap X^{'}, 1\leq i\leq n\}\cup \{\{y_{j}\}: 1\leq j\leq m\}$ is a finite open subcover for $C$ from $\{U_{x}\cup U_{x}^{'}\setminus \{x^{'}\}: x\in C\cap X\}\cup \{\{x_i^{'}\} : x_i^{'}\in C\cap X^{'}\}$. Hence $C$ is compact. \qquad\\ We come to the conclusion that $C\subset A(X)$ is compact if and only if every open set $U\subset X, C\cap X\subset U$ we must have $C\cap X^{'}\setminus U^{'}$ is a finite subset of $X^{'}$.\qquad\\ Using this result we can cite an example for which $A(X)$ is not cellular-compact but $X$ is cellular-compact.

\begin{example}
Suppose $A(X)$ is Cellular-Compact and $X$ is infinite. Now isolated points of $A(X)= X\cup X^{'}$, namely the points of $X^{'}$  form an infinite family of disjoint open sets. Hence $A(X)$ must contain a compact set $C$ such that $C\cap \{x^{'}\}\neq \emptyset$ for each $x^{'}\in X^{'}$. ie, $C\supset X^{'}$. Since $C= X^{'}$ is impossible, $C\cap X\neq \emptyset$ and $C\cap X$ is a compact set. As $C$ is compact in $A(X)$, by the preceding observation, if $U\subset X$ is an open set containing $C\cap X$, then $X^{'}\setminus U'= C\cap X^{'}\setminus U^{'}$ is a finite subset of $X^{'}$ vide Theorem 2.2.\cite{AK2017}. i.e, $X\setminus U$ is a finite subset $X$. i.e, every open set of $X$ containing $C\cap X$ is cofinite in $X$.\qquad\\

Now take $X= \mathbb{R}$ with cocountable topology . Then $X$ is a $T_{1}$ space which is cellular-compact. Also any compact subset of $X$ is finite. Now look at $A(X)= X\cup X^{'}$. If $A(X)$ has to be cellular-compact, from the preceding para it is clear that $X$ must have a compact set $K$ such that any open set containing $K$ must be cofinite. Since $K$ is anyhow finite, this is not true! Thus $A(X)$ cannot be cellular-compact.
\end{example}
	
\begin{theorem}
Let $X$ be a Hausdorff space and $A(X)= X\cup X^{'}$ be the Alexandroff duplicate space. If $A(X)$ is cellular-compact , then there should exist a compact set $K\supset$ the dense set of isolated points of $X^{'}$. Since $K$ is closed , as $A(X)$ is $T_{2}, K= A(X)$. Hence $A(X)$ is compact. Consequently if $X$ is a non-compact $T_{2}$ space , then $A(X)$ cannot be cellular-compact. In presence of Hausdorffness $A(X)$ is cellular-compact if and only if $X$ is compact if and only if $A(X)$ is compact.
\end{theorem}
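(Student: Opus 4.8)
The statement already sketches its own argument; the plan is to flesh it out, and the one point that genuinely needs care is \emph{which} dense set of isolated points to invoke. First I would pin down the isolated points of $A(X)$. Every $x'\in X'$ is isolated since $\{x'\}$ is a basic neighbourhood, and a point $x\in X$ is isolated in $A(X)$ exactly when it is isolated in $X$ (its only candidate singleton neighbourhood is $\{x\}=\{x\}\cup(\{x\}'\setminus\{x'\})$, which is open in $A(X)$ iff $\{x\}$ is open in $X$). Write $I$ for the set of \emph{all} isolated points of $A(X)$, so $X'\subseteq I=X'\cup\{x\in X:x\text{ isolated in }X\}$. I would then check that $I$ is dense in $A(X)$: a basic neighbourhood $U\cup(U'\setminus\{x'\})$ of a point $x\in X$ meets $X'\subseteq I$ whenever $U\neq\{x\}$, and when $U=\{x\}$ is open we have $x\in I$ already; hence $\overline I=A(X)$.

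Next, assume $A(X)$ is cellular-compact. If $X$ is finite then $A(X)$ is finite and there is nothing to prove, so assume $X$ infinite; then $X'$, and hence $I$, is infinite. The family $\{\{z\}:z\in I\}$ is an infinite disjoint family of non-empty open subsets of $A(X)$, so cellular-compactness produces a compact $K\subseteq A(X)$ with $K\cap\{z\}\neq\emptyset$ for every $z\in I$, i.e.\ $I\subseteq K$. As $A(X)$ is Hausdorff, $K$ is closed, so $A(X)=\overline I\subseteq K$, forcing $A(X)=K$ to be compact. This is precisely the ``$K=A(X)$'' step in the statement; the thing I would emphasise is that one must use the full set $I$ of isolated points rather than merely $X'$, since $X'$ is not dense once $X$ has isolated points, and the line $\overline I=A(X)$ would otherwise be false.

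It remains to link the three conditions. For \emph{$A(X)$ compact $\Rightarrow$ $X$ compact}: the retraction $r:A(X)\to X$ with $r(x)=r(x')=x$ is continuous, because the preimage of an open $U\subseteq X$ is the open set $U\cup U'$; thus $X=r(A(X))$ is compact (equivalently, $X=A(X)\setminus X'$ is closed in $A(X)$ because $X'$ is open). For \emph{$X$ compact $\Rightarrow$ $A(X)$ compact}: apply the Observation preceding the theorem with $C=A(X)$, noting $C\cap X=X$ is compact and the only open $U\supseteq X$ is $U=X$, for which $X'\setminus U'=\emptyset$ is finite. For \emph{$A(X)$ compact $\Rightarrow$ $A(X)$ cellular-compact}: immediate, taking the witnessing compact set to be $A(X)$ itself for any disjoint family. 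Chaining these with the previous paragraph yields $A(X)$ cellular-compact $\iff$ $A(X)$ compact $\iff$ $X$ compact, and the contrapositive of the first implication is exactly the claim that a non-compact Hausdorff $X$ has non-cellular-compact $A(X)$.

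I do not anticipate a substantive obstacle: the argument is soft point-set topology throughout. The only spot requiring attention is the density claim together with the observation that the naïve choice $X'$ of ``isolated points'' is insufficient when $X$ has isolated points; recognising and repairing this costs nothing, but it must be noticed for the reduction to compactness to go through.
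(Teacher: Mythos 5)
Your proposal is correct and follows essentially the same route as the paper, which embeds its argument in the statement itself: cellular-compactness yields a compact set containing the dense set of isolated points, and Hausdorffness makes that set closed, hence all of $A(X)$. Your observation that one must take \emph{all} isolated points of $A(X)$, not merely $X'$ (which fails to be dense when $X$ itself has isolated points), is a worthwhile repair of a detail the paper glosses over, and the supporting equivalences you supply ($X$ closed in $A(X)$ for one direction, the preceding Observation with $C=A(X)$ for the other) are exactly what is needed.
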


The following problems are open.

\begin{itemize}
\item[1)] Is the discrete extension of a $CCS$-normal space $CCS$-normal?

\item[2)] Is $A(X)$ $CCS$-normal, when $X$ is $CCS$-normal?
\end{itemize}


\begin{thebibliography}{6}

   \bibitem{AK2018}
	S. AlZahrani,
	$C$-Tychonoff and $L$-Tychonoff topological spaces, European Journal of Pure and Applied Mathematics, {\textbf 11} (2018), 882--892.
	
	
	\bibitem{AK2017}
	S. AlZahrani, L. Kalantan,
	$C$-normal topological property, Filomat, {\textbf 31} (2017), 407--411.
	
    \bibitem{B2019}
    S. Bag, R. C. Manna, S. K. Patra, $Ps$-normal and $Ps$-Tychonoff spaces, doi.org/10.1007/s13370-019-00744-6.

	\bibitem{H1948}
	E. Hewitt, Rings of real valued continuous functions. I, Trans. Amer. Math. Soc., 64(1948), 45--99.
	
	\bibitem{KA2017}
	L. Kalantan, M. Alhomieyed,
	$CC$-normal topological spaces, Turkish Journal of Mathematics, {\textbf 41} (2017), 749--755.

   \bibitem{K1955}
   J. Kelley, General Topology, D. Van Nostrand Company Princeton, New Jersey,1955.

   \bibitem{T2019}
    V. V. Tkachuk, R. G. Wilson,
    Cellular-compact spaces and their applications, Acta. Math. Hunger., {\textbf 159} (2019), 674--688.

\end{thebibliography}
\end{document}